\documentclass[11pt]{article}

\usepackage{amssymb,amsmath,amsfonts,amsthm}
\usepackage{latexsym}
\usepackage{graphics}
\usepackage{indentfirst}
\usepackage{hyperref}
\usepackage[capitalise, noabbrev, nameinlink]{cleveref}
\usepackage{comment}
\usepackage[shortlabels]{enumitem}
\usepackage[english]{babel}
\usepackage{tikz}
\usetikzlibrary{fit,arrows.meta,backgrounds}
\usetikzlibrary{shapes,decorations,graphs,graphs.standard,quotes,backgrounds}
\usetikzlibrary{decorations.markings}
\usepackage{esdiff}
\usepackage{thmtools}
\usepackage{thm-restate}

\definecolor{azure(colorwheel)}{rgb}{0.0, 0.5, 1.0}
\definecolor{Pink}{RGB}{255, 105, 180}

\usetikzlibrary{arrows.meta}

\newtheorem*{thm*}{Theorem}
\newtheorem{thm}{Theorem}
\newtheorem{lem}[thm]{Lemma}

\newtheorem{ques}[thm]{Question}

\crefname{thm}{Theorem}{Theorems}
\crefname{lem}{Lemma}{Lemmas}

\newcommand{\N}{\mathbb{N}}

\allowdisplaybreaks

\begin{document}

\title{Enumeratively Chromatic-Choosable Theta Graphs}

\author{ 
Yanghong Chi  \thanks{Alabama School of Math and Science, Mobile, AL, USA (chiyanghong12138@gmail.com)}
\and
Seoju Lee   \thanks{Alabama School of Math and Science, Mobile, AL, USA (Seojuu.lee@gmail.com)}
\and
Fennec Morrissette \thanks{Department of Biomedical Sciences, University of South Alabama, Mobile, AL, USA (ajm2228@jagmail.southalabama.edu )}
\and
Jeffrey A. Mudrock \thanks{Department of Mathematics and Statistics, University of South Alabama, Mobile, AL, USA (mudrock@southalabama.edu)}
\and
Gavin Nguyen  \thanks{Alabama School of Math and Science, Mobile, AL, USA (Gavin.Nguyen@asms.net)}
\and
Benjamin Whatley
\thanks{Mitchell College of Business, University of South Alabama, Mobile, AL, USA (bvw2422@jagmail.southalabama.edu)}
}

\maketitle

\begin{abstract}
Chromatic choosability is a notion of fundamental importance in list coloring.  A graph $G$ is \emph{chromatic-choosable} when its chromatic number, $\chi(G)$, is equal to its list chromatic number $\chi_{\ell}(G)$.  In 1990, Kostochka and Sidorenko introduced the list color function of a graph $G$, denoted $P_{\ell}(G,m)$, which is the list analogue of the chromatic polynomial of $G$, $P(G,m)$.  A graph $G$ is said to be \emph{enumeratively chromatic-choosable} when $P_{\ell}(G,m)=P(G,m)$ for every $m \in \N$.  Theta graphs and their generalizations have played an important role in graph coloring problems over the years; for example, they appear in the characterization of chromatic-choosable graphs with chromatic number 2.  In this paper we characterize the enumeratively chromatic-choosable theta graphs.  Our proof utilizes ideas from DP-coloring (a.k.a. correspondence coloring), providing yet another example of how the more general setting of DP-coloring can be leveraged to attack a problem in list coloring.  
 
\medskip

\noindent {\bf Keywords.} list coloring, chromatic choosability, list color function, theta graphs, enumerative chromatic choosability

\noindent \textbf{Mathematics Subject Classification.} 05C15, 05C30

\end{abstract}

\section{Introduction}\label{intro}
In this paper, all graphs are nonempty, finite, simple graphs.  Generally speaking, we follow West~\cite{W01} for terminology and notation.  The set of natural numbers is $\N = \{1,2,3, \ldots \}$.  For $m \in \N$, we write $[m]$ for the set $\{1, \ldots, m \}$.  We also take $[0]$ to be the empty set. For a graph $G$, $V(G)$ and $E(G)$ denote the vertex set and the edge set of $G$, respectively.   When $u$ and $v$ are adjacent in $G$ we write $uv$ or $vu$ for the edge with endpoints $u$ and $v$.  For $k \in \N$ and $G=P_k$, when we say that \emph{the vertices of $G$ in order are $v_1,\ldots,v_k$} we mean that two vertices are adjacent in $G$ if and only if they appear consecutively in this ordering.  For $k\geq3$ and $G=C_k$, when we say that \emph{the vertices of $G$ in cyclic order are $v_1,\ldots,v_k$} we mean that two vertices are adjacent in $G$ if and only if they appear consecutively in this ordering or if they are $v_1$ and $v_k$.  We write $K_{m,n}$ for complete bipartite graphs with partite sets of size $m$ and $n$. 

\subsection{List Coloring and Chromatic Choosability} \label{basic}

For classical vertex coloring of graphs, we wish to color the vertices of a graph $G$ with up to $m$ colors from $[m]$ so that adjacent vertices receive different colors, a so-called \emph{proper $m$-coloring}.  The \emph{chromatic number} of a graph $G$, denoted $\chi(G)$, is the smallest $m$ such that $G$ has a proper $m$-coloring.  List coloring is a well-known variation on classical vertex coloring that was introduced independently by Vizing~\cite{V76} and Erd\H{o}s, Rubin, and Taylor~\cite{ET79} in the 1970s.  For list coloring, we associate a \emph{list assignment} $L$ with a graph $G$ such that each vertex $v \in V(G)$ is assigned a list of available colors $L(v)$ ($L$ is said to be a list assignment for $G$).  We say $G$ is \emph{$L$-colorable} if there is a proper coloring $f$ of $G$ such that $f(v) \in L(v)$ for each $v \in V(G)$ (we refer to $f$ as a \emph{proper $L$-coloring} of $G$).  A list assignment $L$ for $G$ is called a \emph{$k$-assignment} if $|L(v)|=k$ for each $v \in V(G)$.  Graph $G$ is said to be \emph{$k$-choosable} if it is $L$-colorable whenever $L$ is a $k$-assignment for $G$.  The \emph{list chromatic number} of a graph $G$, denoted $\chi_\ell(G)$, is the smallest $m$ for which $G$ is $m$-choosable.  It is easy to show that for any graph $G$, $\chi(G) \leq \chi_\ell(G)$.   

A graph $G$ is called \emph{chromatic-choosable} if $\chi(G) = \chi_{\ell}(G)$~\cite{O02}.   Determining whether a graph is chromatic-choosable is, in general, a challenging problem.  Perhaps the most famous conjecture involving list coloring is about chromatic choosability.  Indeed, the Edge List Coloring Conjecture states that every line graph of a loopless multigraph is chromatic-choosable (see~\cite{HC92}).  

\subsection{Enumeratively Chromatic-Choosable Graphs}

In 1912, Birkhoff~\cite{B12} introduced the notion of the chromatic polynomial in hopes of using it to make progress on the four color problem.  For $m \in \N$, the \emph{chromatic polynomial} of a graph $G$, denoted $P(G,m)$, is the number of proper $m$-colorings of $G$. It is easy to show that $P(G,m)$ is a polynomial in $m$ of degree $|V(G)|$ (see~\cite{B12}). For example, whenever $n \in \N$ it is well known that $P(K_n,m) = \prod_{i=0}^{n-1} (m-i)$, $P(C_n,m) = (m-1)^n + (-1)^n (m-1)$, and $P(T,m) = m(m-1)^{n-1}$ whenever $T$ is a tree on $n$ vertices, (see~\cite{B94, W01}).  

The notion of chromatic polynomial was extended to list coloring in the early 1990s by Kostochka and Sidorenko~\cite{AS90}.  If $L$ is a list assignment for a graph $G$, let $P(G,L)$ denote the number of proper $L$-colorings of $G$. For $m \in \N$, the \emph{list color function} of $G$, denoted $P_\ell(G,m)$, is the minimum value of $P(G,L)$ where the minimum is taken over all possible $m$-assignments $L$ for $G$.  Since an $m$-assignment could assign the same $m$ colors to every vertex in a graph, it is clear that $P_\ell(G,m) \leq P(G,m)$ for each $m \in \N$.  In general, the list color function can differ significantly from the chromatic polynomial for small values of $m$.  For example, for any $n \geq 2$, $P_{\ell}(K_{n,n^n},m) = 0$ and $P(K_{n,n^n},m) > 1$ whenever $m \in \{2, \ldots, n\}$.  On the other hand, in 2023 Dong and Zhang~\cite{DZ22}, improving on earlier results (see~\cite{D92, T09, WQ17}), showed that for any graph $G$, $P_\ell(G,m) = P(G,m)$ whenever $m \geq |E(G)|-1$.  

With this and chromatic choosability in mind, it is natural to try to determine which graphs $G$ satisfy $P_\ell(G,m) = P(G,m)$ for every $m \in \N$.  Such graphs are said to be \emph{enumeratively chromatic-choosable}.  Enumerative chromatic choosability was first formally defined in~\cite{KK23} even though it has been pursued since the introduction of the list color function in the early 1990s~\cite{AS90}.  One of the most important open questions on the list color function illustrates a key challenge to proving a graph is enumeratively chromatic-choosable. 

\begin{ques} [\cite{KN16}] \label{ques: threshold}
	For every graph $G$, if $P_{\ell}(G,m)=P(G,m)$ for some $m \geq \chi(G)$, does it follow that $P_{\ell}(G,m+1)=P(G,m+1)$?
\end{ques} 

Indeed, since we don't know the answer to \cref{ques: threshold}, showing that $P_{\ell}(G, \chi(G)) = P(G, \chi(G))$ doesn't imply that $G$ is enumeratively chromatic-choosable.  Relatively little is known about which graphs are enumeratively chromatic-choosable.  Our next theorem gives some examples of enumeratively chromatic-choosable graphs.  However, before we state the theorem we review some important terminology.  Recall that a graph is \emph{chordal} if it contains no induced cycles of length greater than 3.  For $l_1,l_2,l_3 \in \N$, \emph{theta graphs}, denoted $\Theta(l_1,l_2, l_3)$, consist of a pair of end vertices joined by $3$ internally disjoint paths of lengths $l_1$, $l_2$, and $l_3$. We say that the \emph{core} of a connected graph $G$ is the graph obtained from $G$ by successively deleting vertices of degree 1. 

\begin{thm}[\cite{AM25, BK23, HK23, KN16, AS90}] \label{thm: examples} 
The following statements hold.

\begin{enumerate}[label=(\roman*)]
    \item Chordal graphs are enumeratively chromatic-choosable.
    \item Cycles are enumeratively chromatic-choosable.
    \item If $1 \leq l_1 \leq l_2 \leq l_3$ and the parity of $l_1$ is different from both $l_2$ and $l_3$, then $\Theta(l_1, l_2, l_3)$ is enumeratively chromatic-choosable.
    \item Suppose $G$ is a connected graph with $\chi(G)=2$.  Then, $G$ is enumeratively chromatic-choosable if and only if the core of $G$ is a copy of: $K_1$, $C_{2k+2}$ for $k \in \N$, or $\Theta(2,2,2)$ (i.e., $K_{2,3}$). 
\end{enumerate}
\end{thm}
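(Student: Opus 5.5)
This theorem collects results from the literature, so my plan is to indicate how each part is proved rather than rederive everything. All four parts rest on one device. For a vertex ordering $v_1,\ldots,v_n$ and an $m$-assignment $L$, count $L$-colorings greedily: color $v_1,\ldots,v_n$ in order and bound below the number of choices at $v_i$ given the colors already placed on its earlier neighbors. Whenever this lower bound, multiplied over $i$, equals a known factorization of $P(G,m)$, we obtain $P(G,L)\ge P(G,m)$ and hence equality.

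For (i), take a perfect elimination ordering reversed, so that the earlier neighbors of each $v_i$ form a clique of size $d_i$. They then use $d_i$ distinct colors, and $v_i$ has at least $m-d_i$ choices. Since $P(G,m)=\prod_i (m-d_i)$ for chordal graphs, the claim follows.

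For (ii), the greedy bound alone is too weak. I would use the deletion–contraction idea adapted to lists. Fix an edge $uv$ of $C_n$ and count $L$-colorings of the path $C_n-uv$, which is at least $m(m-1)^{n-1}$ by the tree case. Then subtract those colorings with $f(u)=f(v)$. These are counted by an $L'$-coloring of the cycle $C_{n-1}$ obtained by identifying $u$ and $v$ with list $L(u)\cap L(v)$. An induction on $n$ should bound this subtracted term from above by $P(C_{n-1},m)$. That gives the upper bound needed, but only if we track a stronger inductive statement about paths whose end vertices have prescribed possibly equal colors. This upper-bound step is where the real difficulty lies, and I would first try to prove it cleanly via the DP-coloring viewpoint: a cover of a path is always equivalent to an identity cover, so the count of colorings with specified end colors depends only on whether the endpoint correspondences match.

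For (iii), first note $\Theta(l_1,l_2,l_3)$ minus an interior vertex of a path is a cycle with a pendant path. I would peel off the path of length $l_1$, whose parity differs from the other two, and apply the path-with-fixed-ends counts from (ii) to the two cycles $C_{l_1+l_2}$ and $C_{l_1+l_3}$, both of which are odd. The parity condition is what makes the error terms from non-matching lists have the favorable sign.

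For (iv), note that deleting degree-one vertices multiplies both $P_\ell$ and $P$ by $(m-1)$, so it suffices to treat cores. For sufficiency, the cases $K_1$ and $C_{2k+2}$ follow from (ii), and $K_{2,3}$ needs a direct count using the lists on the two degree-3 vertices. For necessity, I would invoke the characterization of $2$-choosable graphs, namely that the core is $K_1$, an even cycle, or $\Theta(2,2,2k)$. This is needed because at $m=2$ we must have $P_\ell\ge P>0$. I would then exhibit a $3$-assignment beating $P(\cdot,3)$ for $\Theta(2,2,2k)$ with $k\ge2$. I expect this explicit bad assignment, together with the upper-bound step in (ii), to be the main obstacles.
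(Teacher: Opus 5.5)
The paper does not prove this theorem; it quotes it from the literature, so your sketch has to stand on its own. Part (i) is the standard perfect-elimination argument, and your reduction to cores in (iv) is correct. Two steps fail, however, and (iii) is not yet a proof.

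\textbf{Part (ii).} The inequality you aim for is false. The number of $L$-colorings of the path $C_n-uv$ with $f(u)=f(v)$ need not be at most $P(C_{n-1},m)$. For $n=4$, take the path $u,x,y,v$ with $L(u)=L(v)=[m]$ and $L(x)=L(y)$ disjoint from $[m]$. That count is $m^2(m-1)>m(m-1)(m-2)=P(C_3,m)$.

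Non-constant lists inflate both the path count and the subtracted term. So no strengthened inductive statement of this upper-bound type can succeed. The DP viewpoint cannot supply one either: completing $\mathcal{H}_L$ to a full cover only adds edges, so it yields only \emph{lower} bounds on endpoint counts.

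The two quantities must be handled together. Write $P(C_n,L)=\sum_{c\neq d}N(c,d)$ for the path from $u$ to $v$ with $k=n-1$ edges. Choose $uv$ with $L(u)\neq L(v)$; if no such edge exists, all lists are equal. Then apply Lemma~\ref{lem: even path simple}. At least $m^2-m+1$ pairs have $c\neq d$, and $(m-1)\bigl((m-1)^k-(-1)^k\bigr)=P(C_n,m)$. This gives $P(C_n,L)\ge P(C_n,m)$ for $m\ge 3$. For odd $k$ you also need $((m-1)^k+1)/m\ge m$ to absorb the matching $A$. The case $m=2$ has to be checked directly.

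\textbf{Part (iv).} The $3$-assignment you plan to exhibit does not exist for $k=2$: $P_\ell(\Theta(2,2,4),3)=P(\Theta(2,2,4),3)=102$. Let $S_1,S_2$ be the paths of length $2$, let $S_3$ be the path of length $4$, and set $W=N_1N_2$.

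\begin{enumerate}[(a)]
\item A $3$-assignment of the $4$-cycle $S_1\cup S_2$ either has all four lists equal, giving $\sum W=18$, or has $\sum W\ge 20$.
\item If $\sum W\ge 20$, Lemma~\ref{lem: even path simple} gives $N_3\ge 5$ everywhere and $N_3\ge 6$ on a perfect matching. Hence $P(G,L)\ge 5\cdot 20+3$.
\item If the four lists are equal, then $P(G,L)=\sum_{c,d}N_3(c,d)+3\sum_c N_3(c,c)\ge 48+3\cdot 18$. The last sum counts colorings of a $3$-assigned $4$-cycle.
\end{enumerate}

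The failure actually occurs at $m=2$, the level you already use to force $2$-choosability. Here $P(\Theta(2,2,2k),2)=2$, but consider the following assignment:
\begin{itemize}
\item $L(u)=\{1,2\}$ and $L(v)=\{1,3\}$;
\item lists $\{1,3\}$ and $\{2,3\}$ on the middle vertices of the two short paths;
\item the chain $\{1,p_1\},\{p_1,p_2\},\dots,\{p_{2k-2},1\}$ of new colors on the interior of the long path.
\end{itemize}
The pairs $(1,3)$ and $(2,3)$ are killed by the short paths, and $(1,1)$ is killed by the chain. The pair $(2,1)$ extends in exactly one way. So $P_\ell(\Theta(2,2,2k),2)=1$.

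\textbf{Part (iii).} Your sketch is too vague to check, and it leans on the flawed part (ii). The cited route proves $P_{DP}(G,m)=P(G,m)$ for these theta graphs. In a full cover the three paths induce permutations, and the parity hypothesis makes the fixed-point correction terms favorable. This gives $P_\ell=P$ immediately.
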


With \cref{thm: examples} in mind, the motivation for this paper was the following question.

\begin{ques} \label{ques: theta}
Which theta graphs are enumeratively chromatic-choosable?
\end{ques}

Aside from the progress already made toward \cref{ques: theta}, theta graphs and their generalizations have played an important role in graph coloring over the years.  Indeed generalized theta graphs have been widely studied (see e.g.,~\cite{BF16, BK23, CM15, ET79, HK23, LB16, LB19, MM21, SJ18}), and they are the main subject of two classical papers on the chromatic polynomial~\cite{BH01, S04} which include the celebrated result that the zeros of the chromatic polynomials of the generalized theta graphs are dense in the whole complex plane with the possible exception of a unit disc.

In this paper we use a generalization of list coloring called DP-coloring to answer \cref{ques: theta}. Specifically, we prove the following.

\begin{thm} \label{thm: characterize}
Suppose $G=\Theta(l_1,l_2,l_3)$ where $\min\{l_1,l_2,l_3\} = l_1$, $l_2 \geq 2$, and $l_3 \geq 2$.  Then, $G$ is not enumeratively chromatic-choosable if and only if $l_1,l_2,$ and $l_3$ all have the same parity and $\{l_1,l_2,l_3\} \neq \{2\}$.
\end{thm}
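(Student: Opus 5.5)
By \cref{thm: examples}, most of the statement reduces to known results, so the plan is a case analysis on parities; only one case needs a new argument. Throughout, let $u,v$ be the end vertices of $G=\Theta(l_1,l_2,l_3)$, and let $Q_i$ be the $u$--$v$ path of length $l_i$.

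\emph{Same parity.} If $l_1,l_2,l_3$ all have the same parity, every cycle of $G$ has length $l_i+l_j$, which is even. So $G$ is bipartite and $\chi(G)=2$. Every vertex of $G$ has degree at least $2$, so the core of $G$ is $G$ itself. Since $G$ has two vertices of degree $3$, it is neither $K_1$ nor a cycle. It is $\Theta(2,2,2)=K_{2,3}$ exactly when $\{l_1,l_2,l_3\}=\{2\}$. This also covers $l_1=1$, since $\Theta(1,l_2,l_3)$ is still not a cycle. By \cref{thm: examples}(iv), $G$ is enumeratively chromatic-choosable if and only if $\{l_1,l_2,l_3\}=\{2\}$. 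This proves the ``if'' direction and the exceptional case of the ``only if'' direction.

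\emph{Mixed parity.} Now suppose the parities are not all equal, so exactly two of the $l_i$ share a parity. If $l_1$ differs in parity from both $l_2$ and $l_3$, then \cref{thm: examples}(iii) applies and we are done. By symmetry, the remaining case is that $l_1\equiv l_2 \not\equiv l_3 \pmod 2$, with $l_1\le l_2$ and $l_1\le l_3$. This case is not covered by earlier results and is the core of the proof. Here $\chi(G)=3$, so for $m\le 2$ both sides are $0$ and we may assume $m\ge 3$.

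For the classical count, let $a_l$ (resp.\ $b_l$) be the number of proper $m$-colorings of the interior of a path of length $l$ whose ends have fixed equal (resp.\ distinct) colors. Then
\[a_l=\frac{(m-1)^l+(-1)^l(m-1)}{m}, \qquad b_l=\frac{(m-1)^l-(-1)^l}{m},\]
and $P(G,m)=m\,a_{l_1}a_{l_2}a_{l_3}+m(m-1)\,b_{l_1}b_{l_2}b_{l_3}$. Note that $a_l-b_l=(-1)^l$.

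For an $m$-assignment $L$, write
\[P(G,L)=\sum_{(c,d)} E_1(c,d)\,E_2(c,d)\,E_3(c,d),\]
where the sum is over $c\in L(u)$ and $d\in L(v)$ with $c\neq d$ if $l_1=1$, and $E_i(c,d)$ counts the $L$-colorings of the interior of $Q_i$ extending $u\mapsto c$, $v\mapsto d$. I would move to the DP-coloring setting. There a list assignment becomes a cover, and the extension counts along a path are entries of a product of matrices of the form $J-P$, where $J$ is the all-ones matrix and $P$ is a partial matching. Two consequences would be used:
\begin{itemize}
\item Each $E_i(c,d)$ is squeezed between $\min(a_{l_i},b_{l_i})$ and $\max(a_{l_i},b_{l_i})$.
\item For fixed $c$, the row sums $\sum_d E_i(c,d)$ are controlled, since they count colorings of a path with one end fixed.
\end{itemize}

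Because $Q_1$ and $Q_2$ have the same parity, $E_1$ and $E_2$ deviate from their classical values in the same direction, so the product $E_1E_2$ is at least what it would be if one of them were classical. Meanwhile $Q_3$ has the opposite parity, so the pair $u,v$ ``prefers'' the opposite relation on $Q_3$. I would first replace $Q_1\cup Q_2$ by the even cycle $C_{l_1+l_2}$, which is enumeratively chromatic-choosable by \cref{thm: examples}(ii). Then I would show that attaching $Q_3$ multiplies the count by at least the classical factor, using a rearrangement (Chebyshev-type) inequality on the sum over $(c,d)$. This works because the deficit from $Q_3$ is concentrated on the pairs $(c,d)$ where $E_1E_2$ is large.

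\emph{Main obstacle.} The step I expect to be hardest is making this rearrangement argument rigorous. The concern is that the deviations of $E_3$ could be anti-correlated with those of $E_1E_2$ in a non-classical cover. I would first try to prove the pointwise inequality
\[E_1E_2E_3 + \text{(correction)} \ \ge\ \text{classical value}\]
by induction on $l_3$, deleting an internal vertex of $Q_3$. If that fails, the fallback is to use the hypothesis that $l_1$ is the minimum to bound the worst-case covers directly.
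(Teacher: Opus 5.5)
Your treatment of the all-same-parity case, and of the case where $l_1$ differs in parity from both $l_2$ and $l_3$, is correct and matches the paper. The gap is the remaining case $l_1\equiv l_2\not\equiv l_3$. This is all of the theorem's new content, and there you offer a heuristic rather than an argument. Several of its ingredients are also wrong or unsupported.

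\emph{The upper bound in your first bullet is false for lists.} On a path of length $2$ with interior vertex $w$, if $c,d\notin L(w)$ then $E(c,d)=m>m-1=\max(a_2,b_2)$. Only the lower bound $E_i\ge\min(a_{l_i},b_{l_i})$ survives; this is \cref{lem: even path simple}.

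\emph{The ``same direction'' claim has no basis.} Whether $E_i(c,d)$ lies above or below its classical value is governed by the lists along $Q_i$, and these are unrelated for different $i$.

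\emph{The aggregate cycle bound loses the information you need.} Once you pass to $\sum_{(c,d)}E_1E_2=P(Q_1\cup Q_2,L)\ge P(C_{l_1+l_2},m)$, you no longer know which pairs carry the mass of $E_1E_2$. The only pointwise information left on $E_3$ is $E_3\ge\min(a_{l_3},b_{l_3})$. This is strictly below the classical value on the pairs $c=d$ if $l_3$ is even, and on the pairs $c\ne d$ if $l_3$ is odd. The resulting bound is $P(G,L)\ge\min(a_{l_3},b_{l_3})\,P(C_{l_1+l_2},m)$.
\begin{itemize}
\item When $l_1=1$ this equals $P(G,m)$, since only $c\ne d$ contributes and $l_3$ is even; this is the paper's \cref{lem: oneedge}.
\item For $l_1\ge 2$ it falls short, e.g.\ by $m(m-1)(m-2)^2$ for $\Theta(2,2,3)$.
\end{itemize}

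\emph{Your bullets cannot supply the missing correlation.} Both hold verbatim for every full DP cover: there $E_i\in\{a_{l_i},b_{l_i}\}$ and the row sums are exactly $(m-1)^{l_i}$. Yet here $P_{DP}(G,m)<P(G,m)$, e.g.\ $39<42$ for $\Theta(2,2,3)$ at $m=3$. So the rearrangement step needs genuinely list-specific structure. The only list-specific input in your plan is the aggregate cycle bound, and that is exactly what discarded the pair-level structure.

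\textbf{How the paper closes this case.} If $L$ is constant, then $P(G,L)=P(G,m)$. Otherwise some edge $qs$ has $L(q)\ne L(s)$, and \cref{lem: DPconnection} gives $P(G,L)\ge P_{DP}(G,m)+C$. Here $P_{DP}(G,m)$ is known exactly from \cref{thm: DPformulas}, and $C\ge(m-1)^{l_1+l_2+l_3-5}(m-2)^2$ by greedy coloring (\cref{thm: countMath}). This surplus covers the deficit $P(G,m)-P_{DP}(G,m)$ except when $m=3$ and the two paths of equal parity have lengths $\{1,3\}$ or $\{2,2\}$.
\begin{itemize}
\item The $\{1,3\}$ family is handled by exactly your cycle argument.
\item The $\{2,2\}$ family is handled by a direct count at $m=3$ via \cref{lem: sum}, splitting on whether $L(u)=L(v)$.
\item When $L(u)\ne L(v)$, the paper combines \cref{lem: even path simple} with the fact that at least $5$ of the $9$ pairs have $N_i\ge 2$ on each length-$2$ path, and then applies AM--GM.
\end{itemize}
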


\section{Proof of Theorem~\ref{thm: characterize}}

Suppose $G=\Theta(l_1,l_2,l_3)$, where $\min\{l_1,l_2,l_3\}=l_1$, $l_2 \geq 2$, and $l_3 \geq 2$.  We immediately have that $G$ is enumeratively chromatic-choosable when the parity of $l_1$ is different from both $l_2$ and $l_3$ by Statement~(iii) of Theorem~\ref{thm: examples}.

Now, suppose $l_1,l_2,$ and $l_3$ all have the same parity. Then, $\chi(G) = 2$. The fact that $G$ is not enumeratively chromatic-choosable when $\{l_1,l_2,l_3\} \neq \{2\}$ and is enumeratively chromatic-choosable when $l_1=l_2=l_3=2$ follows from Statement~(iv) of Theorem~\ref{thm: examples}.

So, to complete the proof of Theorem~\ref{thm: characterize}, we may assume the parity of $l_1$ and $l_3$ is the same and the parity of $l_1$ and $l_2$ is different.  Then, we must prove that $G$ is enumeratively chromatic-choosable; that is, $P_{\ell}(G,m) = P(G,m)$ whenever $m \geq 3$ since $\chi(G)=3$.

The remainder of the proof of Theorem~\ref{thm: characterize} uses ideas from DP-coloring which we will now review.  The concept of DP-coloring was first put forward in 2015 by Dvo\v{r}\'{a}k and Postle under the name \emph{correspondence coloring} (see~\cite{DP15}).  Intuitively, DP-coloring generalizes list coloring by allowing the colors that are identified as the same to vary from edge to edge.  Formally, for a graph $G$, a \emph{DP-cover} (or simply \emph{cover}) of $G$ is an ordered pair $\mathcal{H}=(L,H)$, where $H$ is a graph and $L:V(G)\to 2^{V(H)}$ is a function satisfying the following conditions: 
    \begin{itemize}
		\item $\{L(v) : v \in V(G)\}$ is a partition of $V(H)$ into $|V(G)|$ parts, 

		\item for every pair of adjacent vertices $u$, $v\in V(G)$, the set of edges between $L(u)$ and $L(v)$, denoted $E_H\left(L(u),L(v)\right)$, is a matching (not necessarily perfect and possibly empty), and

		\item $\displaystyle E(H) = \bigcup_{uv \in E(G)} E_{H}(L(u),L(v)).$
    \end{itemize}
    
Suppose $\mathcal{H}=(L,H)$ is a cover of a graph $G$.  A \emph{transversal} of $\mathcal{H}$ is a set of vertices $T\subseteq V(H)$ containing exactly one vertex from each $L(v)$. A transversal $T$ is said to be \emph{independent} if $T$ is an independent set in $H$.  If $T$ is an independent transversal of $\mathcal{H}$, then $T$ is said to be a \emph{proper $\mathcal{H}$-coloring} of $G$, and $G$ is said to be \emph{$\mathcal{H}$-colorable}.  A \emph{$k$-fold cover} of $G$ is a cover $\mathcal{H}=(L,H)$ such that $|L(v)|=k$ for all $v\in V(G)$.  We say that a $k$-fold cover $\mathcal{H}=(L,H)$ of $G$ is \emph{full} when for each $uv \in E(G)$, $E_H(L(u),L(v))$ is a perfect matching.

Using an idea similar to the one Kostochka and Sidorenko used to introduce list color functions of graphs, the notion of chromatic polynomial was extended to the DP-coloring context in 2021~\cite{KM19}. Suppose $\mathcal{H} = (L,H)$ is a cover of a graph $G$, and let $P_{DP}(G, \mathcal{H})$ be the number of proper $\mathcal{H}$-colorings of $G$.  Then, the \emph{DP color function of $G$}, denoted $P_{DP}(G,m)$, is the minimum value of $P_{DP}(G, \mathcal{H})$ where the minimum is taken over all possible $m$-fold covers $\mathcal{H}$ of $G$.  

Now, suppose that $L$ is an $m$-assignment for $G$.  The \emph{cover of $G$ corresponding to $L$}, denoted $\mathcal{H}_L = (\Lambda_L,H_L)$, is the cover of $G$ defined as follows.  For each $v \in V(G)$, $\Lambda_L(v) = \{(v,c) : c \in L(v) \}$, and $H_L$ is the graph with vertex set $\bigcup_{v \in V(G)} \Lambda_L(v)$ and edges created so that for any $(u,c_1),(v,c_2) \in V(H_L)$, $(u,c_1)(v,c_2) \in E(H_L)$ if and only if $uv \in E(G)$ and $c_1 = c_2$.  Notice that if $\mathcal{C}$ is the set of proper $L$-colorings of $G$ and $\mathcal{T}$ is the set of proper $\mathcal{H}_L$-colorings of $G$, then the function $h: \mathcal{C} \rightarrow \mathcal{T}$ given by $h(f) = \{(v,f(v)) : v \in V(G) \}$ is a bijection.  So, for any graph $G$ and $m \in \N$,
\[P_{DP}(G, m) \leq P_\ell(G,m) \leq P(G,m).\]

We now turn our attention back to theta graphs.  Importantly, formulas for the chromatic polynomials and DP color functions of theta graphs are known.  In particular, it is well known (see e.g.,~\cite{BK23}) that when $G= \Theta(l_1, l_2,l_3)$,
\begin{align*}
&P(G,m)=\\
& \frac{((m-1)^{l_1+1}-(-1)^{l_1}(m-1))((m-1)^{l_2+1}-(-1)^{l_2}(m-1))((m-1)^{l_3+1}-(-1)^{l_3}(m-1))}{(m(m-1))^2} \\
&+ \frac{((m-1)^{l_1}+(-1)^{l_1}(m-1))((m-1)^{l_2}+(-1)^{l_2}(m-1))((m-1)^{l_3}+(-1)^{l_3}(m-1))}{m^2}.
\end{align*}
Furthermore, the following was recently shown in~\cite{BK23}.

\begin{thm} [\cite{BK23}] \label{thm: DPformulas} 
Suppose $G=\Theta(l_1,l_2,l_3)$, where $\min\{l_1,l_2,l_3\}=l_1$, $l_2 \geq 2$, and $l_3 \geq 2$.  If the parity of $l_1$ is the same as $l_3$ and different from $l_2$, then for $m \geq 2$: $$P_{DP}(G,m)=\frac{1}{m}\bigg((m-1)^{l_1+l_2+l_3}+(m-1)^{l_1}-(m-1)^{l_2}-(m-1)^{l_3+1}+(-1)^{l_2+1}(m-2)\bigg).$$
\end{thm}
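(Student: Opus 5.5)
The plan is to compute $P_{DP}(G,\mathcal{H})$ exactly for every $m$-fold cover $\mathcal{H}$ of $G$ and then minimize over covers. Throughout, let $u,v$ be the end vertices of $G$ and let $Q_1,Q_2,Q_3$ be the three $u$--$v$ paths, of lengths $l_1,l_2,l_3$.

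\textbf{Step 1: reduce to a two-permutation family.} Adding edges to $H$ can only destroy independent transversals, so we may assume $\mathcal{H}$ is full. Fix a spanning tree $T$ of $G$ consisting of all of $Q_1$ together with all edges of $Q_2$ and $Q_3$ except the last edge at $v$ on each. Along $T$ we rename the vertices of each $L(w)$ as $(w,1),\dots,(w,m)$ so that every matching on an edge of $T$ becomes the identity $(x,c)\sim(y,c)$. This relabelling does not change the number of proper $\mathcal{H}$-colorings. After it, the cover is determined by two permutations $\sigma,\tau$ of $[m]$, namely the matchings on the last edges of $Q_2$ and $Q_3$. Both are meaningful since $l_2,l_3\ge 2$; the case $l_1=1$ is harmless because $Q_1$ lies in $T$.

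\textbf{Step 2: count by conditioning on the end colors.} Fix color $a$ at $u$ and color $b$ at $v$. A path of length $l$ with identity matchings has
\[
N_l(a,b)=\alpha_l+\beta_l[a=b], \qquad \alpha_l=\frac{(m-1)^l-(-1)^l}{m}, \qquad \beta_l=(-1)^l
\]
proper colorings with these end colors. This follows from the standard path count, as in the formula for $P(C_n,m)$. For $Q_2$ and $Q_3$ the twist replaces the indicator $[a=b]$ by $[\sigma(a)=b]$ and $[\tau(a)=b]$ respectively, up to inverting the permutation. Summing the product of the three path counts over $a,b$ and expanding, I expect
\begin{align*}
P_{DP}(G,\mathcal{H}) ={}& m^2\alpha_1\alpha_2\alpha_3 + m(\beta_1\alpha_2\alpha_3+\alpha_1\beta_2\alpha_3+\alpha_1\alpha_2\beta_3) \\
&+ \beta_1\beta_2\alpha_3\,\mathrm{fix}(\sigma)+\beta_1\alpha_2\beta_3\,\mathrm{fix}(\tau)+\alpha_1\beta_2\beta_3\,\mathrm{fix}(\sigma^{-1}\tau)+\beta_1\beta_2\beta_3\,F,
\end{align*}
where $F$ is the number of common fixed points of $\sigma$ and $\tau$.

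\textbf{Step 3: minimize over $(\sigma,\tau)$.} This is the main obstacle. The parity hypothesis gives the signs of the coefficients: $\beta_1\beta_3=1$, $\beta_1\beta_2=\beta_2\beta_3=-1$, and $\beta_1\beta_2\beta_3=(-1)^{l_2}$. So we want $\tau$ to have few fixed points, but $\sigma$ and $\sigma^{-1}\tau$ to have many, and these goals conflict. I would first try $\sigma=\mathrm{id}$ with $\tau$ a derangement, which gives $\mathrm{fix}(\sigma)=m$ and $\mathrm{fix}(\tau)=\mathrm{fix}(\sigma^{-1}\tau)=F=0$. I would then compare it against the competitors $\sigma=\tau$ a derangement, and mixed choices. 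The comparison should use $l_1=\min\{l_1,l_2,l_3\}$, so that $\alpha_1\le\alpha_2,\alpha_3$ (these are nonnegative for $m\ge2$), to show that losing $m$ fixed points in the $\alpha_1\beta_2\beta_3$ term costs less than the alternatives gain. To handle general $(\sigma,\tau)$, I would bound the fixed-point counts jointly using the inequality $\mathrm{fix}(\sigma)+\mathrm{fix}(\sigma^{-1}\tau)\le m+\mathrm{fix}(\tau)$. The correctness of that inequality is exactly what needs to be verified: each point is counted at most once on the left unless it is fixed by $\tau$.

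\textbf{Step 4: match the closed form.} Substitute the minimizing configuration into the expansion from Step 2 and simplify. The result should be
\[
\frac{1}{m}\Big((m-1)^{l_1+l_2+l_3}+(m-1)^{l_1}-(m-1)^{l_2}-(m-1)^{l_3+1}+(-1)^{l_2+1}(m-2)\Big).
\]
The optimal cover achieves this value, and the bound from Step 3 shows every cover has at least this many proper colorings.
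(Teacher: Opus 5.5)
The paper does not prove this statement; it quotes it from \cite{BK23}. So there is no in-paper argument to compare with, and your proof has to stand on its own.

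Steps~1, 2 and~4 are correct. Extending each matching to a perfect one can only lose colorings. The tree normalization leaves exactly the two permutations $\sigma,\tau$. Your path count $\alpha_l+\beta_l[\cdot]$ is Lemma~\ref{lem: BK23} applied to each path, and the six-term expansion is right. Substituting $\mathrm{fix}(\sigma)=m$ and $\mathrm{fix}(\tau)=\mathrm{fix}(\sigma^{-1}\tau)=F=0$ does simplify to the stated closed form.

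Step~3, however, is only a plan, and the ingredients you name do not close it. Write $p=\mathrm{fix}(\sigma)$, $q=\mathrm{fix}(\tau)$, $s=\mathrm{fix}(\sigma^{-1}\tau)$. The part of your expansion that depends on $(\sigma,\tau)$ is then
\[
V=-\alpha_{l_3}p+\alpha_{l_2}q-\alpha_{l_1}s+(-1)^{l_2}F.
\]
There are two problems with the ingredients as stated.
\begin{enumerate}[(a)]
\item When $l_2$ is odd, the last term of $V$ is $-F$. Your inequality $p+s\le m+q$, together with only $\alpha_{l_1}\le\alpha_{l_2},\alpha_{l_3}$, gives $V\ge -m\alpha_{l_3}+(\alpha_{l_2}-\alpha_{l_1})q-F$. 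The facts you list do not rule out $\alpha_{l_2}=\alpha_{l_1}$, in which case this bound is too weak.
\item The claim $\alpha_{l_1}\le\alpha_{l_2}$ is false for $m=2$ with $l_1$ odd and $l_2$ even, since then $\alpha_{l_1}=1$ and $\alpha_{l_2}=0$.
\end{enumerate}

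The repair has two parts.

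First, sharpen the fixed-point inequality. A point fixed by both $\sigma$ and $\sigma^{-1}\tau$ is a common fixed point of $\sigma$ and $\tau$, so in fact $p+s\le m+F$; also $F\le q$. Using $\alpha_l\ge 0$, $p\le m$ and $\alpha_{l_3}\ge\alpha_{l_1}$, this gives
\[
V\ \ge\ -(\alpha_{l_3}-\alpha_{l_1})p-m\alpha_{l_1}+\bigl(\alpha_{l_2}-\alpha_{l_1}+(-1)^{l_2}\bigr)F\ \ge\ -m\alpha_{l_3}+\bigl(\alpha_{l_2}-\alpha_{l_1}+(-1)^{l_2}\bigr)F.
\]

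Second, you need a strict gap between $\alpha_{l_2}$ and $\alpha_{l_1}$, and this is where the opposite parity of $l_2$ is used beyond fixing signs. From
\[
\alpha_{l+1}-\alpha_l=\bigl((m-1)^l(m-2)+2(-1)^l\bigr)/m,
\]
the sequence $\alpha_l$ is nondecreasing for $m\ge 3$, and $\alpha_{l+1}-\alpha_l\ge 1$ whenever $l$ is even. Since $l_2\ge l_1+1$, and $l_1$ is even whenever $l_2$ is odd, the coefficient of $F$ is nonnegative for $m\ge 3$. For $m=2$ check directly: $\alpha_l$ is $1$ for odd $l$ and $0$ for even $l$, and $\alpha_{l_3}=\alpha_{l_1}$ by parity, so the coefficient is exactly $0$.

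Hence $V\ge -m\alpha_{l_3}$, with equality for $\sigma=\mathrm{id}$ and $\tau$ a derangement. With this completion of Step~3 your argument is a full proof.
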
 
We wish to use the formula in Theorem~\ref{thm: DPformulas} along with the following recently introduced tool. 
\begin{lem} [\cite{AM25}] \label{lem: DPconnection}
Suppose $G$ is an arbitrary graph and $L$ is an $m$-assignment for $G$.  Suppose $uv \in E(G)$, $|L(u)-L(v)| = d \geq 1$, and for any $x \in L(u)$ and $y \in L(v)$ with $x \neq y$, there are at least $C$ proper $L$-colorings of $G$ that color $u$ with $x$ and $v$ with $y$.  Then,
\[P(G,L) \geq P_{DP}(G,m) + Cd.\]
\end{lem}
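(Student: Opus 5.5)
The plan is to compare $P(G,L)$ with the number of colorings of a single well-chosen $m$-fold cover of $G$, obtained by modifying the cover $\mathcal{H}_L=(\Lambda_L,H_L)$ corresponding to $L$ only on the edge $uv$. By the bijection $h$ described above, $P(G,L)=P_{DP}(G,\mathcal{H}_L)$. The idea is to add edges to $H_L$ between $\Lambda_L(u)$ and $\Lambda_L(v)$ so that at least $Cd$ colorings are destroyed, while the result is still a legitimate $m$-fold cover.

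\textbf{Step 1: the added matching.} In $H_L$, the matching $E_{H_L}(\Lambda_L(u),\Lambda_L(v))$ consists exactly of the edges $(u,c)(v,c)$ with $c\in L(u)\cap L(v)$. Since $|L(u)|=|L(v)|=m$, we have $|L(u)-L(v)|=|L(v)-L(u)|=d$. Hence the $d$ vertices $(u,x)$ with $x\in L(u)-L(v)$, and the $d$ vertices $(v,y)$ with $y\in L(v)-L(u)$, are unmatched on this edge. Fix a bijection $\sigma: L(u)-L(v)\to L(v)-L(u)$. Let $H'$ be $H_L$ together with the $d$ new edges $(u,x)(v,\sigma(x))$. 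Each endpoint of a new edge was previously unmatched between $\Lambda_L(u)$ and $\Lambda_L(v)$, so $E_{H'}(\Lambda_L(u),\Lambda_L(v))$ is still a matching (in fact a perfect one). All new edges lie over the edge $uv\in E(G)$, so $\mathcal{H}'=(\Lambda_L,H')$ is an $m$-fold cover of $G$. Therefore $P_{DP}(G,m)\le P_{DP}(G,\mathcal{H}')$.

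\textbf{Step 2: counting lost colorings.} Since $H_L\subseteq H'$ on the same vertex set, every proper $\mathcal{H}'$-coloring is a proper $\mathcal{H}_L$-coloring. A proper $\mathcal{H}_L$-coloring fails to be a proper $\mathcal{H}'$-coloring exactly when it contains both endpoints of some new edge $(u,x)(v,\sigma(x))$. Under $h$, these are precisely the proper $L$-colorings $f$ with $f(u)=x$ and $f(v)=\sigma(x)$. Here $x\ne \sigma(x)$ because $x\notin L(v)$. So, by hypothesis, each of the $d$ new edges accounts for at least $C$ such colorings. These $d$ families are pairwise disjoint, since they prescribe different colors $x$ at $u$. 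Hence
\[
P_{DP}(G,\mathcal{H}')\le P_{DP}(G,\mathcal{H}_L)-Cd = P(G,L)-Cd .
\]
Combining this with Step 1 gives $P(G,L)\ge P_{DP}(G,m)+Cd$.

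\textbf{Main obstacle.} The only delicate point is checking that the modified structure remains a valid cover. This holds because the new edges join only vertices left unmatched by the list-coloring matching, and it is exactly why we pair $L(u)-L(v)$ with $L(v)-L(u)$ rather than adding arbitrary edges. The disjointness of the removed families is also needed, to obtain the full factor $d$ rather than only $C$.
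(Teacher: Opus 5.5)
Your proof is correct: pairing $L(u)-L(v)$ with $L(v)-L(u)$ to complete the matching over $uv$ in $\mathcal{H}_L$ gives a valid $m$-fold cover, and passing to it removes $d$ pairwise disjoint families of at least $C$ proper $L$-colorings each (you correctly note $x\neq\sigma(x)$). The paper only cites this lemma from \cite{AM25} and gives no proof; your argument is the natural one and matches the edge-adding device the paper itself uses in the proof of Lemma~\ref{lem: even path simple}.
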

Before applying Theorem~\ref{thm: DPformulas} and Lemma~\ref{lem: DPconnection} we need the following lemma which will provide a bound on the constant $C$ in Lemma~\ref{lem: DPconnection} in our context of interest.
\begin{lem} \label{thm: countMath}
Suppose $G = \Theta(l_1, l_2, l_3)$ where $\min\{l_1,l_2,l_3\}=l_1$, $l_2 \geq 2$, and $l_3 \geq 2$ and $L$ is an $m$-assignment for $G$ with $m \geq 3$. Suppose $q$ and $s$ are adjacent vertices in $G$, and suppose $x \in L(q)$, $y \in L(s)$, and $x \neq y$. Then, there are at least
$$(m-1)^{l_1+l_2+l_3-5} (m-2)^2$$
proper $L$-colorings of $G$ that color $q$ with $x$ and $s$ with $y$.
\end{lem}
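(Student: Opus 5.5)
The plan is a greedy count. We fix $q$ with $x$ and $s$ with $y$, then colour the remaining vertices one at a time in a carefully chosen order. At each step the number of available colours is at least $m$ minus the number of already-coloured neighbours. The product of these lower bounds is then a lower bound on the number of proper $L$-colorings extending the precoloring.

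First, $G$ has $l_1+l_2+l_3-1$ vertices: the two end vertices $u,w$ plus $(l_1-1)+(l_2-1)+(l_3-1)$ internal vertices. So after precolouring $q$ and $s$ there are $l_1+l_2+l_3-3$ vertices left. It therefore suffices to find an ordering of these vertices with two properties:
\begin{itemize}
\item each vertex has at most two earlier neighbours (counting $q,s$ as earlier);
\item at most two vertices have exactly two earlier neighbours.
\end{itemize}
This gives at least $(m-1)^{l_1+l_2+l_3-3-k}(m-2)^k$ colourings with $k\le 2$. Since $m\ge 3$ gives $m-1\ge m-2\ge 1$, this is at least $(m-1)^{l_1+l_2+l_3-5}(m-2)^2$, as claimed.

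For the ordering, note that the edge $qs$ lies on exactly one of the three $u$--$w$ paths; call it $P$, and call the other two paths $Q$ and $R$. Orient $P$ so that, going from $u$ to $w$, we meet $q$ before $s$. We order the vertices as follows.
\begin{enumerate}
\item Colour the vertices of $P$ strictly after $s$, walking from $s$ toward $w$ and ending with $w$ (if $s\ne w$).
\item Colour the vertices of $P$ strictly before $q$, walking from $q$ back toward $u$ and ending with $u$ (if $q\ne u$).
\end{enumerate}
Every vertex coloured so far has exactly one earlier neighbour, namely its predecessor along $P$. The only possible extra adjacency would be between $u$ and $w$ through $Q$ or $R$, but those paths' internal vertices are still uncoloured. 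There is one exception: if $Q$ or $R$ is the single edge $uw$, then the later of $u,w$ gets a second earlier neighbour.
\begin{enumerate}
\setcounter{enumi}{2}
\item Colour the internal vertices of $Q$ in order starting from the one adjacent to $u$. Each has one earlier neighbour, except the last one, adjacent to $w$, which has two.
\item Do the same for $R$.
\end{enumerate}
Thus each of $Q$ and $R$ contributes at most one vertex with two earlier neighbours. Specifically, $Q$ contributes either its last internal vertex or, if $Q$ is the single edge $uw$, the later of $u,w$ in step 1 or 2. Since $Q$ and $R$ cannot both be the single edge $uw$ in a simple graph, no vertex ever has three earlier neighbours, and $k\le 2$.

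The main point needing care is this degenerate bookkeeping. We must handle the cases where $q$ or $s$ is itself an end vertex, and where some path has length $1$ (possible only for $l_1$, and then $P$ could be that path or not). We must also check that the two end vertices never accumulate three earlier neighbours. The ordering above is designed so that each degree-$3$ end vertex is reached along $P$ before either of the other two paths' internal vertices are coloured, so it sees at most one neighbour along $P$ plus possibly the direct edge $uw$. Verifying this uniformly across cases is the only nontrivial step; the counting itself is routine.
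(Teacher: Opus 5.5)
Your proof is correct and is essentially the paper's argument: a greedy count after precolouring $q$ and $s$, in which exactly two of the remaining vertices see two already-coloured neighbours. The only difference is the order. The paper colours the whole cycle through $qs$ (going around from $s$ back to $q$) and then the internal vertices of the third path, which handles the degenerate cases ($q$ or $s$ an end vertex, $l_1=1$) automatically; your order (the path through $qs$ outward in both directions, then the two other paths) needs the small case check on the end vertices that you carried out.
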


\begin{proof}
Clearly, $qs$ is either in the cycle contained in $G$ formed by the path of length $l_1$ and the path of length $l_2$ or in the cycle formed by the path of length $l_1$ and the path of length $l_3$.
Suppose without loss of generality that $qs$ is in the cycle, call it $C$, contained in $G$ formed by the path of length $l_1$ and the path of length $l_2$. Suppose the vertices of this cycle in cyclic order are: $q,s,v_1, \ldots, v_{l_1+l_2-2}$.  Also suppose the vertices of the path of length $l_3$ in $G$, call it $P$, in order are: $u,z_1, \ldots, z_{l_3-1},w$.

Suppose we color $q$ with $x$ and $s$ with $y$. We can now greedily complete a proper $L$-coloring of $G$ as follows. Greedily color the uncolored vertices of $C$ with a color from each vertex's list in the following order: $v_1, \ldots, v_{l_1+l_2-2}$. Notice this can be done in at least $(m-1)^{l_1+l_2-3}(m-2)$ ways. Next, color the uncolored vertices of $P$ with a color from each vertex's list in the following order: $z_1, \ldots, z_{l_3-1}$. Since this can be done in at least $(m-1)^{l_3-2}(m-2)$ ways, the result follows.
\end{proof}

We can now use Theorem~\ref{thm: DPformulas} along with Lemmas~\ref{lem: DPconnection} and~\ref{thm: countMath} to take care of most of the remaining cases.

\begin{lem} \label{lem: nochord}
Suppose $G=\Theta(l_1,l_2,l_3)$ where $\min\{l_1,l_2,l_3\} = l_1$, $l_2 \geq 2$, and $l_3 \geq 2$. Also, suppose the parity of $l_1$ is the same as $l_3$ and different from $l_2$.  If $l_1 + l_3 \geq 6$, then $G$ is enumeratively chromatic-choosable.  Moreover, if $l_1+l_3 = 4$, then $P_{\ell}(G,m) = P(G,m)$ whenever $m \geq 4$. 
\end{lem}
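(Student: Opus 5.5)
The plan is to fix $m \ge 3$ (or $m \ge 4$ in the case $l_1+l_3=4$) and an arbitrary $m$-assignment $L$ for $G$, and to show that $P(G,L) \ge P(G,m)$. Together with the inequality $P_\ell(G,m)\le P(G,m)$ noted earlier, this gives $P_\ell(G,m)=P(G,m)$.

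First I split into two cases. If $L(v)$ is the same set for every $v \in V(G)$, then $P(G,L)=P(G,m)$ and there is nothing to prove. Otherwise, since $G$ is connected, some edge $qs$ has $L(q)\neq L(s)$. As $|L(q)|=|L(s)|=m$, this gives $d=|L(q)-L(s)|\ge 1$. By Lemma~\ref{thm: countMath}, for every $x\in L(q)$ and $y \in L(s)$ with $x\ne y$ there are at least $C=(m-1)^{l_1+l_2+l_3-5}(m-2)^2$ proper $L$-colorings of $G$ coloring $q$ with $x$ and $s$ with $y$. Lemma~\ref{lem: DPconnection} then yields
\[P(G,L)\ge P_{DP}(G,m)+(m-1)^{l_1+l_2+l_3-5}(m-2)^2.\]
It therefore suffices to prove the purely algebraic inequality
\[P(G,m)-P_{DP}(G,m)\le (m-1)^{l_1+l_2+l_3-5}(m-2)^2,\]
with $P_{DP}(G,m)$ given by Theorem~\ref{thm: DPformulas}.

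To compute the left-hand side I set $a=m-1$ and $\varepsilon=(-1)^{l_1}=(-1)^{l_3}=-(-1)^{l_2}$. The factors in the chromatic polynomial simplify to $a^{l+1}-(-1)^l a=a(a^l-(-1)^l)$ and $a^l+(-1)^l a$. I will expand both triple products over the common denominator $m^2=(a+1)^2$ and subtract $\frac{1}{m}\big(a^{l_1+l_2+l_3}+a^{l_1}-a^{l_2}-a^{l_3+1}-\varepsilon(m-2)\big)$, using $(-1)^{l_2+1}=\varepsilon$. The top-degree terms $a^{l_1+l_2+l_3}/m$ should cancel. The difference should then be a signed combination of monomials of the form $a^{l_i+l_j}$, $a^{l_i}$ and constants, each divided by $m$ or $m^2$. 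I expect the result to simplify, after factoring out $(m-2)$ or a similar factor, to an expression whose dominant term is on the order of $a^{l_2+l_3}$ or $a^{l_1+l_2}$ times a bounded rational function of $m$.

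The main obstacle is this computation and the resulting comparison. By the minimality of $l_1$, the exponent $l_1+l_2+l_3-5$ exceeds the largest exponent $l_2+l_3$ appearing in the difference by $l_1-5$, which is not enough by itself. So I will need the exact form of the difference rather than a crude bound, and I expect the top pair-term $a^{l_2+l_3}$ to cancel as well, leaving leading exponent at most about $l_2+\max(l_1,1)$. The hypothesis $l_1+l_3\ge 6$ is what should make $l_1+l_2+l_3-5\ge l_2+1$ with enough room, and the factor $(m-2)^2$ should absorb the remaining coefficients for every $m\ge 3$. When $l_1+l_3=4$, that is $l_1=l_3=2$ or $l_1=1,l_3=3$, the exponent gap is smaller. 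In that case I will verify the inequality directly with the explicit polynomial, expecting it to hold only once $m\ge 4$ so that the extra factor $(m-2)^2\ge 4$ compensates.
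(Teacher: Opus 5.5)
Your reduction is the same as the paper's. You dispose of the constant-list case, pick an edge $qs$ with $L(q)\neq L(s)$, and combine Lemma~\ref{thm: countMath} with Lemma~\ref{lem: DPconnection} (using $d\ge 1$) to get $P(G,L)\ge P_{DP}(G,m)+(m-1)^{l_1+l_2+l_3-5}(m-2)^2$. Everything then reduces to showing $P(G,m)-P_{DP}(G,m)\le (m-1)^{l_1+l_2+l_3-5}(m-2)^2$.

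The gap is that you never prove this inequality, and once the cited results are in hand it is the only nontrivial step. The heuristics you offer in its place are also off. You anticipate surviving terms of order $a^{l_2+l_3}$ or $a^{l_1+l_2}$, and a leading exponent up to about $l_2+l_1$. Yet you then compare the available exponent only with $l_2+1$. For instance, with $l_1=l_3=3$ and $m=3$ the slack is $2^{l_2+1}$, while a surviving term of order $a^{l_1+l_2}$ would be of size $2^{l_2+3}$. So your plan as stated does not show that $l_1+l_3\ge 6$ suffices.

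In fact no mixed terms survive at all. Set $a=m-1$ and $\varepsilon=(-1)^{l_1}=(-1)^{l_3}=-(-1)^{l_2}$. Over the common denominator $m^2$, the numerator of $P(G,m)$ equals $(a+1)\bigl(a^{l_1+l_2+l_3}+a^{l_2+1}-a^{l_1+1}-a^{l_3+1}-\varepsilon a(a-1)\bigr)$, so
\[P(G,m)=\frac{1}{m}\Bigl(a^{l_1+l_2+l_3}+a^{l_2+1}-a^{l_1+1}-a^{l_3+1}-\varepsilon a(a-1)\Bigr).\]
Subtracting the formula of Theorem~\ref{thm: DPformulas} (note $m-2=a-1$) gives the exact identity
\[P(G,m)-P_{DP}(G,m)=(m-1)^{l_2}-(m-1)^{l_1}+(-1)^{l_2}(m-2),\]
which is what the paper's proof uses.

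With this identity the comparison takes one line. First, $(m-1)^{l_1}+(-1)^{l_2+1}(m-2)\ge (m-1)-(m-2)>0$. So it suffices that $(m-1)^{l_1+l_2+l_3-5}(m-2)^2\ge (m-1)^{l_2}$, i.e.\ $(m-1)^{l_1+l_3-5}(m-2)^2\ge 1$. Since $l_1\equiv l_3 \pmod 2$ and $l_3\ge 2$, $l_1+l_3$ is even and at least $4$. For $l_1+l_3\ge 6$ the condition holds for every $m\ge 3$. For $l_1+l_3=4$ it becomes $(m-2)^2\ge m-1$, which holds for $m\ge 4$ and fails at $m=3$, consistent with your expectation.

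So your route is the right one, and the missing piece is a finite computation. As written, though, the proposal leaves the decisive step undone and rests it on a mistaken guess about the shape of $P(G,m)-P_{DP}(G,m)$.
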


\begin{proof}
 Suppose $L$ is an arbitrary $m$-assignment of $G$ with $m \geq 3$ if $l_1+l_3 \geq 6$ and with $m \geq 4$ if $l_1+l_3=4$. We claim that $P(G,L) \geq P(G,m)$ which will imply that $P_{\ell}(G,m) = P(G,m)$ as desired.  If $L$ assigns the same list to every vertex of $G$, $P(G,L)=P(G,m)$. So, we may assume there is a $qs \in E(G)$ such that $L(q) \neq L(s)$. By Theorem~\ref{thm: DPformulas} and Lemmas~\ref{lem: DPconnection} and~\ref{thm: countMath}, we have 
\begin{align*} P(G,L)
&\geq \frac{1}{m}\bigg((m-1)^{l_1+l_2+l_3}+(m-1)^{l_1}-(m-1)^{l_2}-(m-1)^{l_3+1}+(-1)^{l_2+1}(m-2)\bigg) \\
&+(m-1)^{l_1+l_2+l_3-5} (m-2)^2
\end{align*}
(Note that in our application of Lemma~\ref{lem: DPconnection}, we have $d=|L(q)-L(s)|\geq 1$ since $L(q)\neq L(s)$. Thus the term $Cd$ from Lemma~\ref{lem: DPconnection} is at least $C$, and $C$ is at least $(m-1)^{l_1+l_2+l_3-5}(m-2)^2$ by Lemma~\ref{thm: countMath}.)  We also know
\begin{align*}
&P(G,m)= \\
&\frac{((m-1)^{l_1+1}-(-1)^{l_1}(m-1))((m-1)^{l_2+1}-(-1)^{l_2}(m-1))((m-1)^{l_3+1}-(-1)^{l_3}(m-1))}{(m(m-1))^2} \\
&+ \frac{((m-1)^{l_1}+(-1)^{l_1}(m-1))((m-1)^{l_2}+(-1)^{l_2}(m-1))((m-1)^{l_3}+(-1)^{l_3}(m-1))}{m^2}.
\end{align*}
Using the above inequality and equation, we calculate
$$P(G,L)-P(G,m) \geq (m-1)^{l_{1}}-(m-1)^{l_2}+(-1)^{l_2+1}(m-2)+(m-1)^{l_1+l_2+l_3-5}(m-2)^2.$$

Since $m-1 \geq 2$ when $l_1+l_3 \geq 6$ and $(m-2)^2/(m-1) \geq 4/3$ when $l_1+l_3=4$, it is easy to see that $(m-1)^{l_1}+(-1)^{l_2+1}(m-2)\geq 0$ and $(m-1)^{l_1+l_3-5}(m-2)^2 \geq 1$.  So, 
$$(m-1)^{l_1+l_2+l_3-5}(m-2)^2 - (m-1)^{l_2} = (m-1)^{l_2}((m-1)^{l_1+l_3-5}(m-2)^2-1) \geq 0.$$ Consequently, $(m-1)^{l_{1}}-(m-1)^{l_2}+(-1)^{l_2+1}(m-2)+(m-1)^{l_1+l_2+l_3-5}(m-2)^2 \geq 0$, which means $P(G,L) \geq P(G,m)$, as desired.
\end{proof}

Having proven Lemma~\ref{lem: nochord}, to complete the proof of Theorem~\ref{thm: characterize}, we need only show that: $P_{\ell}(\Theta(1,l_2,3),3)=P(\Theta(1,l_2,3),3)$ when $l_2$ is even and $P_{\ell}(\Theta(2,l_2,2),3)=P(\Theta(2,l_2,2),3)$ when $l_2$ is odd and at least 3.   

To prove these results, we make use of a lemma from~\cite{AM25} and DP-coloring.  To make the statement of the lemma easy to state, we introduce some notation that we will use from this point forward.  Whenever $G=\Theta(l_1,l_2,l_3)$ where $\min\{l_1,l_2,l_3\} = l_1$, $l_2 \geq 2$, and $l_3 \geq 2$, we will  suppose the end vertices of the paths that make up $G$ are $u$ and $v$, and we use $S_i$ to denote the path of length $l_i$ in $G$ for each $i \in [3]$.  Also, if $L$ is an $m$-assignment for $G$ and $(c,d) \in L(u) \times L(v)$, we will use $N_{i}(c,d)$ to denote the number of proper $L_i$-colorings of $S_i$ that color $u$ with $c$ and $v$ with $d$ where $L_i$ is $L$ with domain restricted to $V(S_i)$.  

\begin{lem} [\cite{AM25}] 
\label{lem: sum}
Suppose $G=\Theta(l_1,l_2,l_3)$ where $\min\{l_1,l_2,l_3\} = l_1$, $l_2 \geq 2$, and $l_3 \geq 2$.  Let $L$ be an $m$-assignment for $G$.  Then,
\[P(G,L) = \sum_{(c,d) \in L(u) \times L(v)} \prod_{i=1}^3 N_i(c,d).\]
\end{lem}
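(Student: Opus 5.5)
The identity is a direct decomposition of the set of proper $L$-colorings of $G$ according to the colors on the two end vertices, so I would prove it by partitioning and then factoring each part as a product. Every proper $L$-coloring $f$ of $G$ assigns some pair $(f(u),f(v)) \in L(u)\times L(v)$. I would therefore write $P(G,L) = \sum_{(c,d)\in L(u)\times L(v)} M(c,d)$, where $M(c,d)$ is the number of proper $L$-colorings of $G$ with $f(u)=c$ and $f(v)=d$. It then remains to show $M(c,d)=\prod_{i=1}^3 N_i(c,d)$ for each fixed pair.

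For fixed $(c,d)$, I would use that $V(G)$ is the union of $V(S_1),V(S_2),V(S_3)$, which pairwise meet exactly in $\{u,v\}$ because the paths are internally disjoint. Likewise $E(G)$ is the disjoint union of $E(S_1),E(S_2),E(S_3)$. So a function $f$ on $V(G)$ with $f(u)=c$, $f(v)=d$ corresponds bijectively to a triple $(f_1,f_2,f_3)$, where $f_i=f|_{V(S_i)}$ and each $f_i$ colors $u$ with $c$ and $v$ with $d$. Conversely, any such triple glues to a well-defined $f$, since the restrictions agree on the shared vertices $u,v$. Moreover, $f$ is a proper $L$-coloring of $G$ if and only if each $f_i$ is a proper $L_i$-coloring of $S_i$, because every edge of $G$ lies in exactly one $S_i$ and the list conditions are checked vertexwise. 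Hence $M(c,d)=N_1(c,d)N_2(c,d)N_3(c,d)$, and summing over pairs gives the formula.

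The only subtle points are degenerate cases. If $l_1=1$, then $S_1$ is the single edge $uv$, and $N_1(c,d)$ equals $1$ if $c\neq d$ and $0$ otherwise. This correctly kills pairs with $c=d$, and the argument above covers it without change. Pairs with $c=d$ when $l_1\geq 2$ are also handled automatically, since then $u$ and $v$ are nonadjacent in $G$. I would note explicitly that the hypotheses $l_2,l_3\geq 2$ guarantee that $G$ is simple, so at most one path is the edge $uv$ and the decomposition of $E(G)$ really is a disjoint union. I do not expect any step to be a genuine obstacle. The main care needed is stating the gluing bijection cleanly.
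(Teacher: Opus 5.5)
Your proof is correct and complete. The paper gives no proof of its own here and simply cites the lemma from \cite{AM25}. Your argument is the standard one behind this identity: condition on the colors $(c,d)$ of $u$ and $v$, then use that the three internally disjoint paths meet only in $\{u,v\}$ and partition $E(G)$, so the restriction/gluing map is a bijection onto triples of proper $L_i$-colorings.
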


With this in mind, we now use DP-coloring to prove results on the number of guaranteed list colorings of a path.  The following result from~\cite{BK23} will play an important role.

\begin{lem} [\cite{BK23}] \label{lem: BK23}
Suppose $P$ is a path with $k$ edges where $k \in \N$ and $\mathcal{H} = (L,H)$ is a full $m$-fold cover of $P$ with $m \geq 2$.  If $x$ and $y$ are the end vertices of $P$ and there is a path in $H$ connecting $u \in L(x)$ to $v \in L(y)$, then there are 
$$\frac{(m-1)^k - (-1)^{k}}{m} + (-1)^k$$
$\mathcal{H}$-colorings of $P$ that contain $u$ and $v$.  Otherwise there are
$$\frac{(m-1)^k - (-1)^{k}}{m}$$
$\mathcal{H}$-colorings of $P$ that contain $u$ and $v$.
\end{lem}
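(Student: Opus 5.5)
The plan is to reduce the full-cover situation to ordinary proper colorings of a path whose two end vertices receive prescribed colors, and then count those colorings with a two-term recurrence.

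\emph{Step 1 (trivializing the cover).} Let the vertices of $P$ in order be $x=w_0,w_1,\ldots,w_k=y$. Since $\mathcal{H}$ is full, each $E_H(L(w_{i-1}),L(w_i))$ is a perfect matching between two $m$-sets, and these are the only edges of $H$. I would label $L(w_0)$ bijectively by $[m]$ and then push the labels along the matchings one step at a time: give the partner in $L(w_i)$ of a vertex of $L(w_{i-1})$ the same label. Now two vertices of consecutive lists are adjacent in $H$ exactly when they carry the same label. So $\mathcal{H}$ is isomorphic to $\mathcal{H}_L$, where $L$ assigns $[m]$ to every vertex. Hence $\mathcal{H}$-colorings of $P$ correspond bijectively to proper $m$-colorings of $P$. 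Moreover, $H$ is a disjoint union of $m$ paths, one for each label. Therefore, if $u$ has label $a$ and $v$ has label $b$, there is a path in $H$ from $u$ to $v$ if and only if $a=b$.

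\emph{Step 2 (counting).} For fixed $a$, let $A_k$ be the number of proper $m$-colorings of a path with $k$ edges that color $x$ with $a$ and $y$ with $a$. For $b\neq a$, let $B_k$ be the number that color $x$ with $a$ and $y$ with $b$; by symmetry of colors, $B_k$ does not depend on $b$. Counting all colorings with $x$ colored $a$ gives
\[A_k+(m-1)B_k=(m-1)^k.\]
Deleting $y$, the vertex $w_{k-1}$ may receive any color other than $a$, which gives
\[A_{k+1}=(m-1)B_k.\]
The base case is $A_1=0$ and $B_1=1$, and these agree with the claimed formulas.

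\emph{Step 3 (induction).} Assume $B_k=\frac{(m-1)^k-(-1)^k}{m}$. Then
\[A_{k+1}=\frac{(m-1)^{k+1}+(-1)^{k+1}(m-1)}{m}=\frac{(m-1)^{k+1}-(-1)^{k+1}}{m}+(-1)^{k+1}.\]
Substituting this into the total-count identity at $k+1$ gives $B_{k+1}=\frac{(m-1)^{k+1}-(-1)^{k+1}}{m}$, which completes the induction. The case $a=b$ (a path in $H$ joins $u$ and $v$) gives the first formula, and $a\neq b$ gives the second.

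The only step needing care is Step 1. One must make sure the relabeling respects every matching, and that "connected by a path in $H$" translates exactly to "equal labels". This holds because each component of $H$ is a path meeting each $L(w_i)$ exactly once. Once that is in place, the counting is routine.
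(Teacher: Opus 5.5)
Your proof is correct and complete. The paper gives no proof of this statement of its own: it imports the lemma from \cite{BK23}, so there is no in-paper argument to match. Your Step~1 is the same observation the paper relies on in the proof of Lemma~\ref{lem: even path simple}, where it notes that a full cover of a path is a disjoint union of $m$ paths. As you say, this uses the paper's convention that $E(H)$ consists only of the matching edges. If each $L(w_i)$ were a clique, as in some other definitions, the connectivity hypothesis would hold for every pair $u,v$ and so would not distinguish the two cases.

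Your recurrence checks out. The base values $A_1=0$ and $B_1=1$ are right, the identity $A_{k+1}=(m-1)B_k$ holds, and dividing by $m-1$ to recover $B_{k+1}$ is legitimate because $m\ge 2$. There is one harmless indexing slip: on a path with $k+1$ edges the neighbor of $y$ is $w_k$, not $w_{k-1}$.

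You could skip the induction by using the trick the paper itself uses in Lemmas~\ref{lem: oneedge} and~\ref{lem: finish}: add the edge $xy$. For $k\ge 2$, the colorings with $x\mapsto a$ and $y\mapsto b\neq a$ are exactly the proper colorings of $C_{k+1}$ in which two adjacent vertices receive the prescribed colors $a$ and $b$. Hence
\[
B_k=\frac{P(C_{k+1},m)}{m(m-1)}=\frac{(m-1)^k-(-1)^k}{m},
\]
and then $A_k=(m-1)^k-(m-1)B_k=\frac{(m-1)^k-(-1)^k}{m}+(-1)^k$ follows immediately. Your recurrence is self-contained and covers $k=1$ uniformly. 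The cycle route is shorter but leans on the known chromatic polynomial of cycles and needs $k=1$ checked separately.
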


\begin{lem}\label{lem: even path simple}
Let $P$ be a path with $k$ edges where $k \in \N$ and the vertices of $P$ written in order are
$x_0,x_1,\dots, x_k$.  Suppose $L$ is an $m$-assignment 
of $P$ with $m \ge 2$. For each $(c,d) \in L(x_0) \times L(x_k)$, let $N(c,d)$ be the number of proper $L$-colorings of $P$ where $x_0$ is 
colored with $c$ and $x_k$ is colored with $d$. Then,
$$N(c,d) \geq \min \left\{\frac{(m-1)^{k}-(-1)^{k}}{m}, \frac{(m-1)^{k}-(-1)^{k}}{m} + (-1)^{k} \right\}.$$
Moreover, there is a partition $\{A,B\}$ of $L(x_0)\times L(x_k)$ such that $|A|=m$,  $|B|=m(m-1)$, and
\[
N(c,d)\ge 
\begin{cases}
\displaystyle \frac{(m-1)^k-(-1)^k}{m} + (-1)^k & \text{if } (c,d)\in A\\[1em]
\displaystyle \frac{(m-1)^k-(-1)^k}{m} & \text{if } (c,d)\in B.
\end{cases}
\]
\end{lem}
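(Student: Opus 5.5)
The plan is to convert the list assignment $L$ into a full $m$-fold DP-cover of $P$ and then read off the counts from Lemma~\ref{lem: BK23}. Begin with the cover $\mathcal{H}_L=(\Lambda_L,H_L)$ corresponding to $L$. For each edge $x_{i-1}x_i$, the set $E_{H_L}(\Lambda_L(x_{i-1}),\Lambda_L(x_i))$ is a matching joining $(x_{i-1},c)$ and $(x_i,c)$ for each $c\in L(x_{i-1})\cap L(x_i)$. Extend each such matching arbitrarily to a perfect matching between $\Lambda_L(x_{i-1})$ and $\Lambda_L(x_i)$; this is possible because both sets have size $m$. Call the resulting full $m$-fold cover $\mathcal{H}=(\Lambda_L,H)$. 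Since $H_L\subseteq H$, every independent transversal of $\mathcal{H}$ is independent in $H_L$. By the bijection $h$ between proper $L$-colorings and proper $\mathcal{H}_L$-colorings, $N(c,d)$ is therefore at least the number of $\mathcal{H}$-colorings of $P$ containing $(x_0,c)$ and $(x_k,d)$.

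Next, describe the structure of $H$. Each matching is perfect and $P$ is a path, so $H$ is a disjoint union of exactly $m$ paths. Each of these paths contains exactly one vertex from every $\Lambda_L(x_i)$: it starts at a vertex of $\Lambda_L(x_0)$ and follows the unique matching edge at each step until it reaches $\Lambda_L(x_k)$. Consequently, for each $(x_0,c)$ there is exactly one $d\in L(x_k)$ such that $(x_0,c)$ and $(x_k,d)$ are joined by a path in $H$. Let $A$ be the set of these $m$ pairs $(c,d)$; they form the graph of a bijection $L(x_0)\to L(x_k)$. Let $B$ be the complement of $A$ in $L(x_0)\times L(x_k)$, so $|A|=m$ and $|B|=m^2-m=m(m-1)$.

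Now apply Lemma~\ref{lem: BK23}. For $(c,d)\in A$, the number of $\mathcal{H}$-colorings containing $(x_0,c)$ and $(x_k,d)$ equals $\frac{(m-1)^k-(-1)^k}{m}+(-1)^k$. For $(c,d)\in B$, it equals $\frac{(m-1)^k-(-1)^k}{m}$. Combining these with the inequality from the first paragraph gives the partition statement. The first statement follows immediately, since every pair lies in $A$ or $B$ and so $N(c,d)$ is at least the minimum of the two values.

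The only point needing care is the comparison between the cover and the list assignment in the first paragraph. Adding edges to $H_L$ can only destroy independence, never create it, so counts for $\mathcal{H}$ are lower bounds for counts for $\mathcal{H}_L$, and hence for proper $L$-colorings with the prescribed endpoint colors. No real obstacle is expected beyond confirming that $h$ restricts correctly to colorings with $x_0$ colored $c$ and $x_k$ colored $d$, which is immediate from its definition.
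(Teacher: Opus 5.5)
Your proposal is correct and follows the paper's proof essentially step for step. You extend $H_L$ to a full $m$-fold cover by completing each matching to a perfect one, observe that the result is a disjoint union of $m$ paths, take $A$ to be the $m$ endpoint pairs joined by such a path, and apply Lemma~\ref{lem: BK23} together with the fact that proper colorings of the enlarged cover inject into proper $L$-colorings with the same endpoint colors.
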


\begin{proof}
 Suppose the cover of $P$ corresponding to $L$ is $\mathcal{H}_L = (\Lambda_L,H_L)$.  Let $H'$ be the graph obtained from $H_L$ by arbitrarily adding edges so that $E_{H'}\left(\Lambda_L(x_i),\Lambda_L(x_{i+1})\right)$ is a perfect matching for each $i \in \{0\} \cup [k-1]$.  Note that $H'$ is the disjoint union of $m$ paths. Then, let $\mathcal{H}' = (\Lambda_L,H')$.  Clearly, $\mathcal{H}'$ is a full $m$-fold cover of $P$.  Furthermore, let $\mathcal{T}$ be the set of proper $\mathcal{H}'$-colorings of $P$, and let $\mathcal{C}$ be the set of proper $L$-colorings of $P$.  Let $\mathcal{M}: \mathcal{T} \rightarrow \mathcal{C}$ be the injective function that maps each $T \in \mathcal{T}$ to $f_T \in \mathcal{C}$ where $f_T(v)$ is the second coordinate of the ordered pair in $T$ with first coordinate $v$ for each $v \in V(P)$.

Suppose $(\beta, \gamma) \in L(x_0) \times L(x_k)$.  Lemma~\ref{lem: BK23} implies that $\mathcal{T}$ has $((m-1)^{k}-(-1)^{k})/m$ elements that contain $(x_0,\beta)$ and $(x_{k},\gamma)$ if and only if there is no path in $H'$ connecting $(x_0,\beta)$ and $(x_{k},\gamma)$.  It further says that there are $(((m-1)^{k}-(-1)^{k})/m + (-1)^{k})$ elements of $\mathcal{T}$ that contain $(x_0,\beta)$ and $(x_{k},\gamma)$ if and only if there is a path in $H'$ connecting $(x_0,\beta)$ and $(x_{k},\gamma)$. 

Since $\mathcal{M}$ is injective and maps each element of $T$ containing $(x_0,\beta)$ and $(x_k, \gamma)$ to a proper $L$-coloring of $P$ that colors $x_0$ with $\beta$ and $x_k$ with $\gamma$, the desired lower bound on $N(\beta,\gamma)$ follows.  The statement concerning the partition of $L(x_0) \times L(x_k)$ follows from the fact that for exactly $m$ ordered pairs $(\eta,\mu) \in L(x_0) \times L(x_k)$, there is a path in $H'$ connecting $(x_0,\eta)$ and $(x_k, \mu)$. 
\end{proof}

Having proven Lemma~\ref{lem: even path simple}, we are now ready to prove $P_{\ell}(\Theta(1,l_2,3),3)=P(\Theta(1,l_2,3),3)$ when $l_2$ is even.  The following lemma proves something more general.

\begin{lem} \label{lem: oneedge}
Suppose $G=\Theta(1, l_2, l_3)$ where $l_2$ is even and $l_3$ is an odd integer satisfying $l_3>1$. Then, $G$ is enumeratively chromatic-choosable.
\end{lem}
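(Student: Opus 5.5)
We prove the statement for every $m \in \N$ at once, using Lemma~\ref{lem: sum}, Lemma~\ref{lem: even path simple}, and Statement~(ii) of Theorem~\ref{thm: examples}. Lemma~\ref{lem: nochord} is not needed. Since $P_\ell(G,m) \le P(G,m)$ always holds, it suffices to fix an arbitrary $m$-assignment $L$ for $G$ and show $P(G,L) \ge P(G,m)$. We may take $m \ge 3$, because $\chi(G)=3$ gives $P(G,m)=0$ for $m\le 2$.

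\textbf{Step 1 (reduce to a sum over $c\neq d$).} Here $l_1=1$, so $S_1$ is the single edge $uv$. Hence $N_1(c,d)=1$ if $c\neq d$ and $N_1(c,d)=0$ if $c=d$. Lemma~\ref{lem: sum} then gives
\[P(G,L)=\sum_{(c,d)\in L(u)\times L(v),\, c\neq d} N_2(c,d)\,N_3(c,d).\]

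\textbf{Step 2 (bound the even path uniformly).} The path $S_2$ has $k=l_2$ edges with $l_2$ even. By Lemma~\ref{lem: even path simple}, each pair $(c,d)$ satisfies
\[N_2(c,d)\ge \min\left\{\tfrac{(m-1)^{l_2}-1}{m},\ \tfrac{(m-1)^{l_2}-1}{m}+1\right\}=\tfrac{(m-1)^{l_2}-1}{m}=:a.\]
Therefore
\[P(G,L)\ge a\sum_{c\neq d}N_3(c,d).\]

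\textbf{Step 3 (recognize a cycle).} The sum $\sum_{c\neq d}N_3(c,d)$ counts the proper $L$-colorings of the cycle $C_{l_3+1}$ formed by $S_3$ together with the edge $uv$, with lists restricted to that cycle. By Statement~(ii) of Theorem~\ref{thm: examples}, cycles are enumeratively chromatic-choosable, so this sum is at least $P(C_{l_3+1},m)$. Combining,
\[P(G,L)\ge a\,P(C_{l_3+1},m).\]

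\textbf{Step 4 (match the chromatic polynomial).} Apply Steps 1–3 to the constant assignment that gives every vertex $[m]$. For $c\neq d$, the number of proper $m$-colorings of an even path with prescribed distinct end colors is exactly $((m-1)^{l_2}-1)/m=a$. So the same computation yields the identity
\[P(G,m)=a\,P(C_{l_3+1},m).\]
This can also be checked against the displayed formula for $P(G,m)$ as a sanity check. Hence $P(G,L)\ge P(G,m)$, which completes the proof.

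The only delicate point is Step 2. Lemma~\ref{lem: even path simple} allows the bonus pairs (the set $A$) to sit anywhere, so a pair-by-pair comparison would need to control how those pairs align with the pairs $(c,d)$ where $N_3(c,d)$ is small. The uniform lower bound $a$ sidesteps this because, for even $l_2$, the correction term $(-1)^{l_2}=+1$ is nonnegative. This is exactly why the hypothesis that $l_2$ is even, rather than $l_3$, is what the argument needs.
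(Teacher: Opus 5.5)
Your proof is correct and follows the paper's argument essentially step for step. Like the paper, you use Lemma~\ref{lem: sum} with $N_1(c,d)$ equal to $1$ or $0$ according as $c\neq d$ or $c=d$, the uniform bound $((m-1)^{l_2}-1)/m$ from Lemma~\ref{lem: even path simple}, and the enumerative chromatic choosability of the $(l_3+1)$-cycle formed by $S_3$ and the edge $uv$; the only cosmetic difference is that the paper reads off $P(G,m)=\frac{(m-1)^{l_2}-1}{m}\left((m-1)^{l_3+1}+(m-1)\right)$ from the closed-form theta formula, whereas you obtain the same identity, equally validly, by running your decomposition on the constant assignment.
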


\begin{proof}
Suppose $L$ is an arbitrary $m$-assignment of $G$ with $m \geq 3$.  We must show that $P(G,L) \geq P(G,m)$. By the formula for the chromatic polynomial of a theta graph, we know that 
\begin{align*}
P(G,m)= \left(\frac{(m-1)^{l_2}-1}{m}\right) \left( (m-1)^{l_3+1}+(m-1) \right).
\end{align*}
Furthermore, by \cref{lem: sum}, we know \[P(G,L) = \sum_{(c,d) \in L(u) \times L(v)} N_1(c,d)N_2(c,d)N_3(c,d).\]
By \cref{lem: even path simple} and the fact that $N_1(c,d)=0$ when $c=d$ and $N_1(c,d)=1$ when $c \neq d$, we have
$$P(G,L) = \sum_{(c,d) \in L(u) \times L(v), c \neq d} N_1(c,d)N_2(c,d)N_3(c,d)
\geq \frac{(m-1)^{l_2}-1}{m} \sum_{(c,d) \in L(u) \times L(v), c \neq d} N_3(c,d).$$
Now, suppose the graph $M$ is the cycle obtained from the path $S_3$ by adding an edge between $u$ and $v$. Since each proper $L_3$-coloring of $M$ corresponds to a proper $L_3$-coloring of $S_3$ that colors $u$ and $v$ differently, $P(M,L_3)= \sum_{(c,d) \in L(u) \times L(v), c \neq d} N_3(c,d)$. Since cycles are enumeratively chromatic-choosable by Theorem~\ref{thm: examples} and $M$ is a $(l_3+1)$-cycle, $P(M,L_3) \geq P(C_{l_3+1},m)=(m-1)^{l_3+1}+(m-1).$ Consequently,

$$\frac{(m-1)^{l_2}-1}{m} \sum_{(c,d) \in L(u) \times L(v), c \neq d} N_3(c,d) \geq \left(\frac{(m-1)^{l_2}-1}{m}\right) \left( (m-1)^{l_3+1}+(m-1) \right)$$
as desired.
\end{proof}

Finally, we complete the proof of Theorem~\ref{thm: characterize} by proving $P_{\ell}(\Theta(2,l_2,2),3)=P(\Theta(2,l_2,2),3)$ when $l_2$ is odd and at least 3. 

\begin{lem} \label{lem: finish}
Suppose $G=\Theta(2,l_2,2)$ where $l_2$ is odd and $l_2 \geq 3$.  Then, $P_{\ell}(G,3)=P(G,3).$    
\end{lem}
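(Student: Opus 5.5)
The plan is to work directly with the decomposition in Lemma~\ref{lem: sum}, and to get a sharp enough bound by combining Lemma~\ref{lem: even path simple} for the long path $S_2$ with the enumerative chromatic choosability of the $4$-cycle (Statement~(ii) of Theorem~\ref{thm: examples}). Let $k=l_2$, which is odd and at least $3$. Fix an arbitrary $3$-assignment $L$ for $G$, and let $a$ and $b$ be the middle vertices of $S_1$ and $S_3$ respectively.

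First I compute the target. Substituting $m=3$, $l_1=l_3=2$, $l_2=k$ odd into the chromatic polynomial formula gives
\[P(G,3)=\bigl(2^{k+1}+2\bigr)+4\bigl(2^k-2\bigr)=6\cdot 2^k-6.\]
Next, by Lemma~\ref{lem: sum},
\[P(G,L)=\sum_{(c,d)\in L(u)\times L(v)} N_1(c,d)N_2(c,d)N_3(c,d).\]
Here $N_1(c,d)=|L(a)-\{c,d\}|$ and $N_3(c,d)=|L(b)-\{c,d\}|$. Since $|\{c,d\}|\le 2<3$, we have $N_1(c,d)N_3(c,d)\ge 1$ for every pair $(c,d)$.

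Apply Lemma~\ref{lem: even path simple} to $S_2$ with $m=3$ and $k$ odd. This gives a partition $\{A,B\}$ of $L(u)\times L(v)$ with $|A|=3$ and $|B|=6$. For $(c,d)\in A$ we get $N_2(c,d)\ge (2^k+1)/3-1=(2^k-2)/3$, and for $(c,d)\in B$ we get $N_2(c,d)\ge (2^k+1)/3=(2^k-2)/3+1$. Hence
\[P(G,L)\ \ge\ \frac{2^k-2}{3}\sum_{(c,d)} N_1(c,d)N_3(c,d)\ +\ \sum_{(c,d)\in B}N_1(c,d)N_3(c,d).\]

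The key observation is that $\sum_{(c,d)}N_1(c,d)N_3(c,d)$ is exactly the number of proper $L$-colorings of the $4$-cycle $M$ with vertices in cyclic order $u,a,v,b$ (with $L$ restricted to $V(M)$). Every such coloring is determined by the colors of $u$ and $v$ and independent choices at $a$ and $b$; note that $c=d$ is allowed here since $u$ and $v$ are not adjacent in $M$. Since cycles are enumeratively chromatic-choosable, this sum is at least $P(C_4,3)=2^4+2=18$. The second sum has $|B|=6$ terms, each at least $1$. Therefore
\[P(G,L)\ \ge\ \frac{2^k-2}{3}\cdot 18+6=6\cdot 2^k-6=P(G,3),\]
so $P_\ell(G,3)\ge P(G,3)$, and the reverse inequality is automatic.

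The main obstacle is recognizing that a crude approach is not enough. The DP bound of Theorem~\ref{thm: DPformulas} together with Lemmas~\ref{lem: DPconnection} and~\ref{thm: countMath} falls short here: it gives $P_{DP}(G,3)=5\cdot 2^k-1$ and $C\ge 2^{k-1}$, which fails when $d=1$. Similarly, a pairwise bound on $N_1N_3$ loses too much. The fix is to aggregate $N_1N_3$ over all pairs as a $4$-cycle count, and then recover the missing $6$ from the $+1$ surplus on the six pairs in $B$. I would double-check the arithmetic of $P(G,3)$ and the exact sizes $|A|=3$, $|B|=6$, since the argument is tight.
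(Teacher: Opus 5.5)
Your proof is correct, and it takes a genuinely different route from the paper's.

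The paper splits into the cases $L(u)=L(v)$ and $L(u)\neq L(v)$. In the first case it bounds $N_1,N_3$ pointwise (by $2$ when $c=d$, by $1$ otherwise) and groups $N_2$ into a count of $L$-colorings of the $(l_2+1)$-cycle $S_2+uv$. In the second case it uses the $\beta,\gamma,\mu$ count to show that at least five pairs have $N_1\ge 2$ (and likewise for $N_3$). It then applies AM--GM to the nine products, together with the partition $\{A,B\}$ from Lemma~\ref{lem: even path simple}.

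You instead group $N_1N_3$ over all nine pairs as the number of $L$-colorings of the $4$-cycle $S_1\cup S_3$. You charge every pair the minimum value $(2^{l_2}-2)/3$ of $N_2$, and recover the exact deficit from the unit surplus on the six pairs of $B$. This needs no case split, no counting of pairs with $N_1\ge 2$, and no AM--GM.

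Your argument also gives more. Using $N_1N_3\ge (m-2)^2$ and $P(C_4,m)=(m-1)^4+(m-1)$, the same computation yields
\[
P(G,L)\ \ge\ \left(\frac{(m-1)^{l_2}+1}{m}-1\right)\bigl((m-1)^4+(m-1)\bigr)+m(m-1)(m-2)^2=P(G,m)
\]
for every $m\ge 3$. So it shows directly that $\Theta(2,l_2,2)$ with $l_2$ odd is enumeratively chromatic-choosable, without needing Lemma~\ref{lem: nochord} (and hence the DP color function) for $m\ge 4$.

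One small correction to your closing commentary, which does not affect the proof. When $d=1$, the DP route via Lemmas~\ref{lem: DPconnection} and~\ref{thm: countMath} gives $P(G,L)-P(G,3)\ge 5-2^{l_2-1}$. So it actually succeeds for $l_2=3$ and fails only for $l_2\ge 5$.
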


\begin{proof}
Suppose $L$ is an arbitrary $3$-assignment of $G$.  We must show that $P(G,L) \geq P(G,3)$. By the formula for the chromatic polynomial of a theta graph, we know that 
\begin{align*}
P(G,3)= 2^{l_2+1}+2 + (2)^2(2^{l_2}-2) = 6(2^{l_2}-1).
\end{align*}
Furthermore, by \cref{lem: sum}, we know \[P(G,L) = \sum_{(c,d) \in L(u) \times L(v)} N_1(c,d)N_2(c,d)N_3(c,d).\]
We will first show the desired result in the case that $L(u)=L(v)$ and then in the case that $L(u) \neq L(v)$.  Suppose $L(u)=L(v)$. Notice that for each $i \in \{1,3\}$, we have that $N_i(c,d) \geq 1$ when $c \neq d$ and $N_i(c,d) \geq 2$ otherwise.  Consequently, 
\begin{align*}
P(G,L) &= \sum_{(c,d) \in L(u) \times L(v), c \neq d} N_1(c,d)N_2(c,d)N_3(c,d) + \sum_{(c,d) \in L(u) \times L(v), c = d} N_1(c,d)N_2(c,d)N_3(c,d) \\
&\geq \sum_{(c,d) \in L(u) \times L(v), c \neq d} N_2(c,d) + 4\sum_{(c,d) \in L(u) \times L(v), c = d} N_2(c,d).    
\end{align*}
Now, suppose the graph $M$ is the cycle obtained from the path $S_2$ by adding an edge between $u$ and $v$. Notice $P(M,L_2)= \sum_{(c,d) \in L(u) \times L(v), c \neq d} N_2(c,d)$. Since cycles are enumeratively chromatic-choosable by Theorem~\ref{thm: examples} and $M$ is a $(l_2+1)$-cycle, $P(M,L_2) \geq P(C_{l_2+1},3)=2^{l_2+1}+2.$  Now, using this fact, \cref{lem: even path simple}, and the fact that there are exactly three pairs in $L(u) \times L(v)$ that have the same first and second coordinate, we obtain 
\begin{align*}
P(G,L) &\geq \sum_{(c,d) \in L(u) \times L(v), c \neq d} N_2(c,d) + 4\sum_{(c,d) \in L(u) \times L(v), c = d} N_2(c,d) \\
&\geq 2^{l_2+1}+2 + (3)(4)\left( \frac{2^{l_2}+1}{3} - 1\right) = P(G,3).
\end{align*}

Now we turn our attention to the case where $L(u) \neq L(v)$.  We begin by giving a lower bound on the number of pairs $(c,d) \in L(u) \times L(v)$ with the property $N_i(c,d) \geq 2$ where $i \in \{1,3\}$.  With this in mind, suppose that $w$ is the internal vertex of $S_1$.

Let $\beta = |L(w) \cap L(u) \cap L(v)|$, $\gamma = |L(w) \cap (L(u)-L(v))|$, and $\mu = |L(w) \cap (L(v)-L(u))|$.  Notice that $\beta \in \{0,1,2\}$ and $\beta+\gamma+\mu \leq |L(w)|=3$.  We have that $N_1(c,d) \geq 1$ for each $(c,d) \in L(u) \times L(v)$.  Furthermore in order for $N_1(c,d)=1$, it must be that $c \neq d$ and $c,d \in L(w)$.  Consequently, the number of elements $(c,d) \in L(u) \times L(v)$ satisfying $N_1(c,d)=1$ is
$$(\gamma+\beta)(\mu+\beta)-\beta.$$
By using the facts that $\beta$, $\gamma$, and $\mu$ are nonnegative integers, $\beta \in \{0,1,2\}$, and $\beta+\gamma+\mu \leq 3$, one can easily verify that $(\gamma+\beta)(\mu+\beta)-\beta \leq 4$.  Consequently, there are at least 5 ordered pairs $(c,d) \in L(u) \times L(v)$ satisfying $N_1(c,d) \geq 2$.  Similarly, there are at least 5 (possibly different) ordered pairs $(c,d) \in L(u) \times L(v)$ satisfying $N_3(c,d) \geq 2$. Furthermore, by Lemma~\ref{lem: even path simple}, we know there is a partition $\{A,B\}$ of $L(u)\times L(v)$ such that $|A|=3$,  $|B|=6$, and
\[
N_2(c,d)\ge 
\begin{cases}
\displaystyle \frac{2^{l_2}-2}{3} & \text{if } (c,d)\in A\\[1em]
\displaystyle \frac{2^{l_2}+1}{3} & \text{if } (c,d)\in B.
\end{cases}
\]
Using these facts along with the AM-GM inequality, we see
\begin{align*}
P(G,L) &= \sum_{(c,d) \in L(u) \times L(v)} N_1(c,d)N_2(c,d)N_3(c,d) \\
&\geq 9 \left( \prod_{(c,d)\in L(u) \times L(v)} N_1(c,d)N_2(c,d)N_3(c,d)\right)^{1/9} \\ 
&\geq   9 \left( (2^5)(2^5)\left(\frac{2^{l_2}-2}{3} \right)^3\left(\frac{2^{l_2}+1}{3} \right)^6\right)^{1/9}  \\
&\geq 6 ((2^{l_2}-2)(2^{l_2}+1)^2)^{1/3}
\end{align*}
Since $l_2 \geq 3$, it is easy to verify that $(2^{l_2}-2)(2^{l_2}+1)^2 \geq (2^{l_2}-1)^3$.  Consequently, $P(G,L) \geq 6(2^{l_2}-1)=P(G,3)$ as desired. 
\end{proof}

{\bf Acknowledgment.} The authors would like to thank Sarah Allred and Hemanshu Kaul for helpful conversations.  The authors would also like to thank the anonymous referee for helpful comments.  This project was completed by the Coloring Research Group of South Alabama at the University of South Alabama during the fall 2025 and spring 2026 semesters.  The support of the University of South Alabama is gratefully acknowledged.

\bibliographystyle{hplain}
\bibliography{bibliography}{}
\vspace{-5mm}

\end{document}